\newtheorem{thm}{Theorem}[section]
\newtheorem{prop}[thm]{Proposition}
\newtheorem{lem}[thm]{Lemma}
\theoremstyle{definition}
\newtheorem{rem}[thm]{Remark}
\newtheorem{ex}[thm]{Example}
\numberwithin{equation}{section}
\newcommand{\secref}[1]{Section~\textup{\ref{#1}}}
\newcommand{\thmref}[1]{Theorem~\textup{\ref{#1}}}
\newcommand{\lemref}[1]{Lemma~\textup{\ref{#1}}}
\newcommand{\propref}[1]{Proposition~\textup{\ref{#1}}}
\newcommand{\remref}[1]{Remark~\textup{\ref{#1}}}
\newcommand{\exref}[1]{Example~\textup{\ref{#1}}}
\newcommand{\bb}[1]{\mathbb{#1}}
\newcommand{\cc}[1]{\mathcal{#1}}
\newcommand{\C}{\bb C}
\newcommand{\Z}{\bb Z}
\newcommand{\T}{\bb T}
\newcommand{\KK}{\ensuremath{\cc K}}
\renewcommand{\AA}{\cc A}
\newcommand{\Chi}{\raisebox{2pt}{\ensuremath{\chi}}}
\newcommand{\minus}{\setminus}
\renewcommand{\epsilon}{\varepsilon}
\newcommand{\<}{\langle}
\renewcommand{\>}{\rangle}
\newcommand{\inv}{^{-1}}
\newcommand{\what}{\widehat}
\newcommand{\ann}{^\perp}
\newcommand{\id}{\text{id}}
\renewcommand{\subset}{\subseteq}
\renewcommand{\bar}{\overline}
\newcommand{\variso}{\overset{\simeq}{\longrightarrow}}
\newcommand{\xt}{\otimes}
\newcommand{\mtx}[1]{\begin{pmatrix} #1 \end{pmatrix}}
\newcommand{\cst}{\ensuremath{C^*}}
\newcommand{\csta}{\ensuremath{C^*}-algebra}
\newcommand{\cstg}{\ensuremath{C^*(G)}}
\newcommand{\cog}{\ensuremath{C_0(G)}}
\newcommand{\cg}{\ensuremath{C(G)}}
\newcommand{\ag}{\ensuremath{A(G)}}
\newcommand{\ltwo}{\ensuremath{L^2(G)}}
\newcommand{\dg}{\ensuremath{\delta_G}}
\newcommand{\cstwg}{\ensuremath{C^*_\omega(G)}}
\newcommand{\mn}{\ensuremath{M_n}}
\newcommand{\fixed}{\ensuremath{A^\delta}}
\newcommand{\rep}{representation}
\newcommand{\hm}{homomorphism}
\newcommand{\fb}{Fell bundle}
\newcommand{\spn}{\operatorname{span}}
\newcommand{\ad}{\operatorname{Ad}}
\newcommand{\aut}{\operatorname{Aut}}
\newcommand{\diag}{\operatorname{diag}}
\newcommand{\su}{\operatorname{SU}(2)}
\newcommand{\so}{\operatorname{SO}(3)}
\newcommand{\un}{\operatorname{U}(n)}
\newcommand{\pun}{\operatorname{PU}(n)}
\newcommand{\spec}{\operatorname{sp} \delta}
\newcommand{\midtext}[1]{\quad\text{#1}\quad}
\newcommand{\righttext}[1]{\quad\text{#1 }}
\begin{document}

\title[Coactions of compact groups on $M_n$]{Coactions of compact groups on $M_n$}
\author[Kaliszewski]{S. Kaliszewski}
\address{School of Mathematical and Statistical Sciences, Arizona State University, Tempe, AZ 85287}
\email{kaliszewski@asu.edu}
\author[Landstad]{Magnus~B. Landstad}
\address{Department of Mathematical Sciences\\
Norwegian University of Science and Technology\\
NO-7491 Trondheim, Norway}
\email{magnus.landstad@ntnu.no}
\author[Quigg]{John Quigg}
\address{School of Mathematical and Statistical Sciences, Arizona State University, Tempe, AZ 85287}
\email{quigg@asu.edu}

\dedicatory{We dedicate this paper to the memory of our friend and colleague Iain Raeburn.}

\date{\today}

\subjclass[2010]{46L05, 46L55}
 \keywords{Coaction,
inner,
ergodic,
cocycle}
\begin{abstract}
We prove that
every coaction of a compact group on a finite-dimensional \csta\ is associated with a \fb.
Every coaction of a compact group on a matrix algebra is implemented by a unitary operator.
A coaction of a compact group on $M_n$ is inner if and only if
its fixed-point algebra has an abelian \cst-subalgebra of dimension $n$.
Investigating the existence of effective ergodic coactions on $M_n$ reveals that $\so$ has them, while $\su$ does not.
We give explicit examples of the two smallest finite nonabelian groups having 
effective ergodic coactions on $M_n$.
\end{abstract}

\maketitle

\section{Introduction}\label{s:intro}

Coactions of locally compact groups on \csta s were invented to generalize crossed-product duality to nonabelian groups.
Coactions of abelian groups correspond to actions of the dual groups,
via Fourier transform,
and the theories correspond remarkably closely.
In particular,
coactions of discrete groups correspond to actions of compact groups,
and
coactions of compact groups correspond to actions of discrete groups.
Since actions of compact groups are the easiest, it is not surprising that coactions of discrete groups are also easiest; in fact, they are essentially the same as Fell bundles (over discrete groups).
On the other hand, actions of discrete groups can be quite hard, and correspondingly coactions of compact groups can be expected to be hard too.
So, we felt that it is a good time to look closely at coactions of compact groups.
As a first step, we compensate by restricting the \csta\ to be finite-dimensional --- in fact, as our title indicates, we concentrate on \mn.
A long time ago Iain asked one of us exactly this question, which we now can answer.

As a reward, we show that coactions of compact groups on finite-dimensional \csta s are given by Fell bundles (see \thmref{fb}).
We remark (see \remref{general}) that in fact the proof would work for arbitrary amenable groups (amenability would be needed to ensure that the Fourier algebra $A(G)$ separates points of $M(\cstg)$),
but we do not go into details here since we did not want to break up the flow with compact groups.
By the way, a Fell bundle over a locally compact group is required to have a compatible topology. However, every Fell bundle we will encounter over our compact group $G$ will have only finitely many nonzero fibres, so the topology of $G$ becomes irrelevant.

Next, since actions of discrete groups on \mn\ are unitarily implemented, it is natural to expect something along these lines for coactions of compact groups, and we prove this in \thmref{Mn implement}
(see also \cite[Theorem~3]{wassermann}).
The key here is to note that the passage from actions of an abelian group $G$ to coactions of its dual group $\what G$ involve passing from \cog\ to \cstg, and that the individual automorphisms of $G$ can be recovered from \cog\ by composing with irreducible \rep s.

Inner coactions of $G$ are determined by \hm s of \cg, and we use this to characterize the inner coactions on \mn\ as 
those
whose fixed-point algebras contain a maximal abelian subalgebra of \mn\ (see \thmref{fixed inner}).
We illustrate our techniques by considering inner coactions on $M_2$ and $M_3$ (Examples~\ref{m2} and \ref{m3}).

In some sense at the opposite extreme from inner coactions are the ergodic ones, and we might as well assume that our ergodic coactions are effective (that is, every spectral subspace $A_x$ is nonzero), and this forces us down to finite groups --- indeed, those of order $n^2$ since our coactions are on \mn.
In fact, the existence of an effective ergodic coaction of $G$ on \mn\
is characterized by a result of Kleppner \cite{kleppner} regarding ``regular elements''.
The case of $M_p$ with $p$ prime is unsurprisingly quite special, forcing $G=\Z_p\times \Z_p$ (see \thmref{prime}).
As an application of our techniques, we show that $\su$ has no ergodic coaction on any \mn\ (with $n>1$), while $\so$ does (see \exref{su}).
In \exref{explicit} we give an explicit example of two finite nonabelian groups having 
effective ergodic coactions on $M_n$.
The groups have order 16, and this is minimal.

In \secref{s:s3} we examine coactions on \mn\ of the smallest nonabelian (discrete) group $S_3$,
and in particular we characterize the effective inner coactions, which require $n\ge 4$ (see \propref{s3 inner}).
Finally, in \secref{sec:end} we briefly indicate further possible research (and we comment ``Clearly, we have only scratched the surface'').

This research is part of the EU Staff Exchange project 101086394 ``Operator Algebras That One Can See''. It was partially supported by the University of Warsaw Thematic Research Programme ``Quantum Symmetries''. We thank our host Piotr M. Hajac and IMPAN for the hospitality.

\section{Preliminaries}\label{s:prelim}

Throughout, $G$ is a compact group with left Haar measure, and $A,B,\dots$ typically denote \csta s.
We refer to \cite[Appendix~A]{enchilada} for actions, and coactions.
If $\delta:A\to M(A\xt\cstg)$ is a coaction of $G$ on $A$, we say that the pair $(A,\delta)$ is a coaction.

Since $G$ is compact, all coactions are normal (and maximal).
Furthermore,
the Fourier algebra \ag\ coincides with the Fourier-Stieltjes algebra $B(G)$,
so it is a unital Banach *-algebra\footnote{with norm from the dual space $\cstg^*$, pointwise multiplication, and involution given by $f^*(x)=\bar{f(x\inv)}$} that is dense in \cg.
Most of the background on \ag\ is contained in \cite{eymard}.
Most importantly
\cite[Th\'eor\`eme~3.34]{eymard} says that the dual (ie., the maximal ideal space) of \ag\ can be identified with $G$ via
\[
\Chi_x(f)=f(x)\righttext{for}f\in \ag,x\in G.
\]

We will use the general fact that for any \csta\ $B$ the dual space $B^*$ can be regarded as the dual space of the multiplier algebra $M(B)$ when the latter is given the strict topology.
The \emph{zero set} of an ideal $I$ of \ag\ is 
$\{x\in G:f(x)=0\text{ for all }f\in I\}$, 
and
a \emph{spectral set} in $G$ is a closed subset $E$ for which $\{f\in \ag:f(x)=0\text{ for all }x\in E\}$
is the only closed ideal of \ag\ with zero set $E$.

Every coaction $(A,\delta)$ gives rise to a module structure on $A$ over the Fourier algebra $A(G)$ via slicing:
\[
f\cdot a:=\<\delta(a),\id\xt f\>\righttext{for}f\in A(G),a\in A.
\]
The same formula also makes $A$ a module over the Fourier-Stieltjes algebra $B(G)$.

If $H$ is a closed subgroup of $G$, then any coaction of $H$ on $A$ can be \emph{inflated} to a coaction of $G$ on $A$, using the canonical homomorphism of $\cst(H)$ into $M(\cstg)$ (see \cite[Example~A.29]{enchilada}).

We refer to \cite{fd} for \fb s\footnote{called \emph{\cst-algebraic bundles} in \cite{fd}} $\AA$ over $G$,
and to \cite{lprs} for the associated coaction $\delta_\AA$ --- often called the \emph{dual coaction} --- on the full cross-sectional algebra $\cst(\AA)$.
If $(A,\delta)$ is a coaction of $G$, then for $x\in G$ the \emph{spectral subspace} is
$A_x=\{a\in A:\delta(a)=a\xt x$,
and the \emph{spectrum} of $\delta$ is
$\spec=\{x\in G:A_x\ne \{0\}\}$.
If $\spec=G$ we say the coaction is \emph{effective}.
Dangerous bend: the spectrum is not always a subgroup of $G$
(see \cite[Example~5.3]{discrete} for a counterexample with $G=\Z_4$ --- however, note that  the \fb\ described in \cite{discrete} could alternatively be regarded as 
coming from an effective coaction of
$\Z_3$, as indicated in \exref{m2}).
When $G$ is finite (or, more generally, discrete,
or when
the fibres $A_x$ are only finitely nonzero),
the family $\{A_x\}_{x\in G}$ forms a Fell bundle over $G$,
and we say that $\delta$ is \emph{associated with $\AA$} if $(A,\delta)\simeq (\cst(\AA),\delta_\AA)$.
Every coaction of a discrete group has an associated Fell bundle, but
this is not automatic for arbitrary groups (see \cite[Example~2.3 (6)]{lprs} for a counterexample with $G=\T$).
When $G$ is finite the isomorphism classes of
\fb s $\AA$ and their associated coactions $\delta_\AA$ 
are in bijective correspondence (see \cite[Theorem~3.8 and Corollary~3.9]{discrete}).

The spectral subspace $A_e$ is also denoted by \fixed, and called the \emph{fixed-point subalgebra}.
A coaction $\delta$ is called \emph{ergodic} if $\fixed=\C 1$.
In this case, for every $x\in \spec$ the spectral subspace $A_x$ is spanned by a (nonunique) unitary $U_x$,
and the spectrum $\spec$ is a subgroup of $G$.

Suppose that $\delta$ is an effective ergodic coaction of $G$ on $A$.
Interestingly, for each $x\in G$ we get an automorphism $\ad U_x$ of $A$,
and in fact $x\mapsto \ad U_x$ is an action of $G$ on $A$.
When $A=\mn$
and $|G|=n^2$
this process can be reversed,
and we get a bijection between ergodic coactions and ergodic actions of $G$ on $A$.

We write $w_G$ for the unitary element of
\[
M(\cg\xt\cstg)=C(G,M^\beta(\cstg))\]
(where the ``$\beta$'' signifies that we mean continuity with respect to the strict topology on the multiplier algebra)
given by the canonical embedding $G\hookrightarrow M(\cstg)$.
If $\mu:\cg\to M(A)$ is a unital homomorphism, then the unitary element
\[
w_\mu:=(\mu\xt\id)(w_G)\in M(A\xt \cstg)
\]
satisfies
\[
(\id\xt\delta_G)(w_\mu)=(w_\mu\xt 1)(\id\xt\Sigma)(w_\mu),
\]
where $\Sigma$ is the ``flip automorphism'' of $\cstg\xt\cstg$ given on elementary tensors by $\Sigma(x\xt y)=y\xt x$,
and where \dg\ is the canonical coaction on \cstg\,
given on group elements by $\dg(x)=x\xt x$.
Unitaries satisfying this identity are called \emph{co\rep s}.
In the opposite direction,
\cite[Theorem~A.1]{naktak} proves that
every co\rep\ $w\in M(A\xt\cstg)$ is of the form $w_\mu$ for a unique \hm\ $\mu$ given by slicing:
\[
\mu(f)=\<w,\id\xt f\>\righttext{for}f\in \ag.
\]
The relevance for us is that for any $\mu$ there is an \emph{inner coaction} $\delta$ of $G$ on $A$ given by
\begin{equation}\label{implement}
\delta(a)=\ad w_\mu(a\xt 1)\righttext{for}a\in A.
\end{equation}
Such coactions are called \emph{unitary} in \cite[Examples~2.3 (3)]{lprs}
(and \cite[Lemma~1.11]{fullred} shows that conditions (b) and (c) in \cite{lprs} are redundant).
More generally, if $(A,\delta)$ is any coaction, we say that a unitary $w\in M(A\xt\cstg)$ \emph{implements} $\delta$,
and that $\delta$ is \emph{unitarily implemented},
if
\eqref{implement} holds.
Of course, not all unitaries will implement coactions,
and moreover a moment's thought reveals that a coaction that is implemented by a unitary $w$ might not be inner,
since slicing $w$ might not give a *-\hm\ of \ag\ (and hence of \cg).
For example, when $G$ is finite abelian we have ergodic actions, which are implemented by unitaries twisted by 2-cocycles.

We need the fact that if 
$c\in M(\cstg)$, then $\dg(c)=c\xt c$
implies that $c\in G$.
This is a folklore result concerning ``group-like elements'',
and is easy to verify by slicing and using
\[
\<\dg(c),f\xt g\>=\<c,fg\>\righttext{for}f,g\in \ag
\]
together with the identification of $G$ with the spectrum of \ag.

We frequently need to work with unitary multipliers of $M(\cstg)$, and for convenience we write
\[
UM(\cstg)=\{u\in M(\cstg):u\text{ is unitary}\}.
\]

In our study of effective ergodic coactions on \mn, we need to make contact with finite groups of \emph{central type}\footnote{We thank Erik B\'edos for informing us of this terminology and the references \cite{ginosar,schnabel}.}, 
defined in the literature as groups having a simple twisted group algebra; the motivation for the terminology has to do with central group extensions.
Groups of central type are of increasing interest in the past few decades.
The abelian groups of central type are quite well understood.
The smallest nonabelian examples have order 16,
where there are two nonisomorphic groups of central type.
In \exref{explicit}
we will discuss a couple of examples that we found in \cite{schnabel}.

We need a bit of the theory of 2-cocycles,
and we adopt the conventions of \cite{kleppner}:
Keep in mind that for any ergodic coaction $\delta$ of a finite group $G$ on a unital \csta\ $A$ each spectral subspace $A_x$ is spanned by a unitary, and any choice of a spanning set of unitaries $\{U_x\}_{x\in G}$,
where we always choose $U_e=1$,
gives a projective representation of $G$ in $A$, which by definition has
an associated 2-\emph{cocycle} $\omega\in Z^2(G,\T)$ determined by
\begin{equation}\label{cocycle}
U_xU_y=\omega(x,y)U_{xy}\righttext{for}x,y\in G,
\end{equation}
and which satisfies 
\begin{enumerate}
\item
$\omega(x,e)=\omega(e,x)=1$ for all $x\in G$
and

\item
$\omega(x,y)\omega(xy,z)=\omega(x,yz)\omega(y,z)$ for all $x,y,z\in G$.
\end{enumerate}
$U$ as above is called an \emph{$\omega$-\rep}.

As explained in
\cite[Propositions~VIII.10.10 and VIII.16.2]{fd},
$\omega$-\rep s of $G$,
\rep s of a cocycle bundle $\AA$ with cocycle $\omega$,
and central extensions of $\T$ by $G$
correspond up to isomorphism.
Choosing the spanning unitaries $U_x$
is almost the same as choosing the associated cocycle $\omega$ ---
the unitaries determine the cocycle uniquely,
but a different choice of unitaries could lead to the same cocycle.
Two cocycles $\omega$ and $\omega'$ are \emph{cohomologous} if 
there is a map $m:G\to\T$ such that
\[
\omega'(x,y)=m(x)m(y)\bar{m(xy)}\omega(x,y)\righttext{for}x,y\in G,
\]
and $\omega'$ is \emph{trivial} if it is cohomologous to $\omega=1$.
Every cocycle is cohomologous to a \emph{normalized} one, meaning that
\[
\omega(x,x\inv)=1\righttext{for all}x\in G,
\]
and we will always assume that our cocycle $\omega$ is normalized.
Choosing $\omega$ to be normalized is the same as choosing the spanning unitaries so that
\[
U_x^*=U_{x\inv}\righttext{for all}x\in G.
\]
A routine computation shows that for $\omega$ normalized we have
\[
\omega(x,y)\inv=\omega(y\inv,x\inv)\righttext{for all}x,y\in G.
\]
There will in general be many $\omega$-\rep s for any given cocycle $\omega$.
When $G$ is finite there will be $\omega$-\rep s $U$ that are
\emph{universal}
in the sense that for any $\omega$-\rep\ $V$ there is a unique \hm\
$\pi_V$ making the diagram
\[
\begin{tikzcd}
G \arrow[r,"U"] \arrow[dr,"V"']
&\cst(U(G)) \arrow[d,"\pi_V",dashed]
\\
&\cst(V(G))
\end{tikzcd}
\]
commute.
As usual, the pair $(\cst(U(G)),U)$ is unique up to isomorphism,
and we write
$\cstwg$ for any such algebra $\cst(U(G))$,
and call it the \emph{twisted group \csta} for $\omega$
(although we do not in general give a special notation to the associated $U$).
One choice of such a universal $\omega$-\rep\ is the 
\emph{right regular $\omega$-\rep}
\cite[page 556]{kleppner}
$R$ of $G$ on \ltwo,
defined by
\[
(R_x\xi)(y)=\omega(y,x)\xi(yx).
\]
We have
\[
\cstwg=\spn U(G)
\]
for any unversal $U$,
so
\[
\dim\cstwg=|G|.
\]
Also, any $\omega$-\rep\ $V$ is universal 
if and only if the associated \hm\ $\pi_V:\cstwg\to \cst(V(G))$ is faithful,
if and only if the unitaries $\{V_x:x\in G\}$ are linearly independent.

Later we will be interested in the possibility that \cstwg\ is simple, i.e., is isomorphic to \mn\ for some $n$. This obviously puts restrictions on $G$ --- for example, the order $|G|$ must be $n^2$. But this alone is insufficient.
It seems natural to say that $G$ \emph{has a simple twisted \csta} if
there exists a cocycle $\omega$ for which \cstwg\ is simple,
equivalently there exists an irreducible projective \rep\ $U$ of $G$
such that the unitaries $\{U_x\}_{x\in G}$ are linearly independent.
In the literature such a group is said to be of \emph{central type}.
There is a subtlety: the literature also contains numerous studies of 
\emph{faithful} irreducible projective \rep s $U$,
meaning that the composition of $U$ with the quotient map $\un\to \pun$
is faithful, equivalently the only $x\in G$ for which $U_x$ is a scalar operator is $x=e$.
This is weaker than the unitaries $\{U_x\}_{x\in G}$ being linearly independent.
For example,
it follows from \cite[Theorem~4.4]{ng} that
there are groups of order 16 with faithful irreducible projective \rep s $U$ on $\C^2$, and trivially $\{U_x\}_{x\in G}$ must be linearly dependent.
Interestingly, these groups are nonabelian.
This behavior cannot occur for abelian groups, because
Frucht \cite{frucht} proved that a finite abelian group has a faithful irreducible projective \rep\ if and only if it is isomorphic to $H\times H$ for some (abelian) group $H$,
and by \cite[paragraph following Corollary~1]{kleppner} such a group does have
a simple twisted \csta.
The smallest nonabelian finite groups of central type have order 16.

In \secref{s:unitary} we need to use the \emph{destabilization} process:
if $A$ 
is an elementary algebra (i.e., $A$ is isomorphic to the compact operators on a Hilbert space)
and 
$\iota:A\to M(B)$ is a nondegenerate homomorphism,
we define the \emph{relative commutant} of $A$ in $B$ as
\[
C(B,\iota):=\{c\in M(B):c\iota(a)=\iota(a)c\in B\text{ for all }a\in A\}.
\]
Then $B$ is a stabilization in a functorial way:
there is a unique isomorphism
\[
\theta:A\xt C(B,\iota)\variso B
\]
given on elementary tensors by $\theta(a\xt c)=\iota(a)c$.
We call this construction the destabilization process;
it is probably folkore, and is described explicitly in
\cite[Proposition~A.2]{gap},
\cite[Lemma~27.2]{exel},
\cite[Section~3]{fischer},
\cite[Theorem~2.1]{hjeror},
and
\cite[Proposition~3.4]{destabilization},
for example.

\section{\fb s}\label{s:fb}

We show that every coaction of a compact group on a finite-dimensional \csta\ $A$ is associated with a Fell bundle.

We will need to use the fact that every finite subset of $G$ is a spectral set for \ag;
this is implied by
\cite[Theorem~4]{warner},
since the empty set is a Ditkin set (a fact that is explicitly mentioned in the paragraph preceding Warner's Theorem 1).

\begin{thm}\label{fb}
Let $\delta$ be a coaction of a compact group $G$ on a finite-dimensional \csta\ $A$.
Then there is a \fb\ $\AA$ over $G$ such that $(A,\delta)$ is isomorphic to the dual coaction $(\cst(\AA),\delta_\AA)$.
\end{thm}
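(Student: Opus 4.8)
The plan is to take the Fell bundle to be the family of spectral subspaces $\{A_x\}_{x\in G}$, and to show that finite-dimensionality of $A$ forces all but finitely many of them to vanish and the rest to span $A$. The mechanism is the \ag-module structure from slicing. Write $\phi\:\ag\to\operatorname{End}(A)$ for the map $\phi(f)a=f\cdot a=\<\delta(a),\id\xt f\>$ into the algebra of linear operators on $A$. The coaction identity together with $\<\dg(x),f\xt g\>=\<x,fg\>$ shows that $\phi$ is a homomorphism for the commutative pointwise product on \ag, and it is unital since the constant function $1\in\ag$ acts as the identity on $A$; moreover, for $a\in A_x$ we have $\phi(f)a=f(x)a=\Chi_x(f)a$, so $A_x$ is precisely the joint eigenspace of $\phi(\ag)$ for the character $\Chi_x$. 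Because $A$ is finite-dimensional, $\phi(\ag)$ is a finite-dimensional commutative algebra, hence has only finitely many characters; equivalently, the (closed) ideal $\ker\phi$ has finite zero set $S\subseteq G$.

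The crux is to upgrade this to a genuine direct-sum decomposition $A=\bigoplus_{x\in S}A_x$, and here I would invoke that finite subsets of $G$ are spectral sets for \ag. A priori a finite-dimensional module over the commutative algebra \ag\ splits only into generalized eigenspaces --- the primary components of the Artinian algebra $\phi(\ag)$ --- and it is exactly the spectral-set hypothesis that rules out nilpotents. Indeed, $\ker\phi$ is a closed ideal with zero set the finite set $S$; since $S$ is a spectral set, $\ker\phi$ must equal the full ideal $\{f\in\ag:f|_S=0\}$. Evaluation then identifies $\phi(\ag)\cong\ag/\ker\phi$ with a unital, point-separating subalgebra of $\C^S$, which for a finite set is all of $\C^S$. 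As $\C^S$ is semisimple, $A$ is a semisimple $\phi(\ag)$-module, so its generalized eigenspaces are honest eigenspaces and $A=\bigoplus_{x\in S}A_x$; in particular $\spec=S$ is finite. I expect this semisimplicity step to be the main obstacle: the finite-dimensional splitting into generalized eigenspaces is automatic, whereas excluding generalized eigenvectors genuinely uses the structure of \ag, and the spectral-set property quoted just before the theorem is precisely what delivers it.

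Assembling the bundle and matching the coaction is then routine. Since $\delta$ is a $*$-\hm\ and group elements are unitary in $M(\cstg)$, the identities $\delta(ab)=ab\xt xy$ for $a\in A_x,b\in A_y$ and $\delta(a^*)=a^*\xt x\inv$ for $a\in A_x$ give $A_xA_y\subseteq A_{xy}$ and $A_x^*\subseteq A_{x\inv}$; thus the finitely-nonzero family $\AA=\{A_x\}$ is a \fb\ over $G$, with the topology of $G$ irrelevant. Identifying the algebraic cross-sectional $*$-algebra $\bigoplus_{x\in S}A_x$ with $A$, convolution becomes the product of $A$ and the bundle involution becomes that of $A$, so this $*$-algebra is already the \csta\ $A$; its enveloping \csta, namely $\cst(\AA)$, is therefore $A$, and the canonical surjection $\cst(\AA)\to A$ is an isomorphism. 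By construction $\delta(a)=a\xt x$ for $a\in A_x$ is the dual coaction $\delta_\AA$, so $(A,\delta)\simeq(\cst(\AA),\delta_\AA)$.
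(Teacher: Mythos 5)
Your proof is correct, and it reaches the paper's target decomposition $A=\bigoplus_{x\in\spec}A_x$ by a genuinely different route. The paper argues one element at a time: for each $a$ it forms the annihilator ideal $(a)^\circ\subset\ag$, identifies its zero set with a finite set $E$, invokes \cite{warner} to get $(a)^\circ=I_E$, and then uses the Bipolar Theorem together with the coaction identity to write $\delta(a)=\sum_{x\in E}b_x\xt x$ with $b_x\in A_x$. You instead work globally with the representation $\phi\:\ag\to\operatorname{End}(A)$, show $\phi(\ag)\cong\ag/\ker\phi\cong\C^S$ using the same spectral-set input, and finish by semisimplicity of the module. The key external ingredient --- finite subsets of $G$ are spectral sets for \ag\ --- is identical in both arguments, but your packaging replaces the per-element bipolar/coaction-identity computation with a single module-theoretic one, and it isolates exactly what the spectral-set hypothesis buys: it kills the nilpotents in $\phi(\ag)$ that would otherwise leave only a generalized-eigenspace decomposition. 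One step you leave implicit deserves a sentence: the inclusion of the $\Chi_x$-eigenspace of $\phi(\ag)$ into $A_x$ (your ``precisely'') is not formal --- it requires that slicing by \ag\ separates the points of $A\xt M(\cstg)$, i.e.\ the duality $\ag=B(G)=(\cstg)^*$ against the strict topology, which is precisely the point the paper singles out in \remref{general} and is where amenability would enter in any generalization. The concluding identification of $A$ with $\cst(\AA)$ and of $\delta$ with $\delta_\AA$ is equally brief in both arguments and is fine.
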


\begin{proof}
Let $a\in A$ be nonzero, and put
\[
(a)^\circ=\{\phi\in A(G):\phi\cdot a=0\}.
\]
Note that\[
\delta(a)=\sum_{i=1}^p e_i\xt c_i,
\]
where 
$\{e_1,\dots,e_p\}$ is a basis of $A$
and $c_i\in M(\cstg)$.
To ease the notational burden a bit, put $D=\{c_1,\dots,c_p\}$.
Because the $e_i$ are linearly independent it follows by slicing that $\phi\in (a)^\circ$
if and only if
$\phi$ is in the annihilator $D\ann$.

Now, $(a)^\circ$ is an ideal of \ag\ of finite codimension,
so is contained in only finitely many maximal ideals.
By \cite[Th\'eor\`em~3.34]{eymard}, the maximal ideals of \ag\ are of the form $I_x=\{\phi\in \ag:\phi(x)=0\}$, so the zero set of $(a)^\circ$ can be identified with a finite subset $E\subset G$.
As we mentioned before this proof,
$E$ is spectral
because it is finite (see \cite[Theorem~4]{warner}), consequently the ideal $(a)^\circ$ coincides with the intersection
\[
I_E=\bigcap_{x\in E}I_x=\{\phi\in \ag:\phi(x)=0\text{ for all }x\in E\}.
\]

Now, by general \cst-theory we can identify $\ag=(\cstg)^*$ with the dual space of $M(\cstg)$ when the latter is given the strict topology.
Then we can interpret the above as saying that the two subsets $D,E$ of $M(\cstg)$ have the same annihilator in \ag, i.e., 
\[
D\ann=E\ann.
\]
It then follows by the Bipolar Theorem that $D$ is contained in the double polar of $E$.
Since $E$ is finite, this double polar is just the linear span. Thus we have
\[
D\subset \spn E.
\]
Thus
\[
\delta(a)\in A\xt \spn E,
\]
and so there are $b_x\in A$ such that
\begin{equation}\label{delta a}
\delta(a)=\sum_{x\in E} b_x\xt x.
\end{equation}
We have
\begin{align*}
\sum_{x\in E} \delta(b_x)\xt x
&=(\delta\xt\id)\circ\delta(a)
\\&=(\id\xt\delta_G)\circ\delta(a)
\\&=\sum_{x\in E} b_x\xt x\xt x.
\end{align*}
Since 
$E$ is linearly independent in $M(\cstg)$,
we conclude that
\[
\delta(b_x)=b_x\xt x\righttext{for all}x\in E,
\]
i.e., $b_x$ is in the spectral subspace $A_{x}$.

Slicing \eqref{delta a} by $1\in A(G)$, we conclude that
$a=\sum_{x\in E} b_x$.
Letting $a$ run through a basis of $A$ we conclude that
$A$ is the sum of the spectral subspaces $\{A_x\}_{x\in\spec}$, where $\spec$ is the spectrum of the coaction $\delta$.
Now, the family $\AA:=\{A_{x}\}_{x\in G}$
is a Fell bundle, and since $A$ is finite-dimensional it must coincide with the full
cross-sectional algebra $\cst(\AA)$.
\end{proof}

\begin{rem}\label{general}
In the above theorem, we exploit the duality of the locally convex spaces
$M(\cst(G))$ and $A(G)$,
i.e. we are identifying $A(G)=B(G)$ with the dual space of $M(\cst(G))$ when the latter is given the strict topology.
In particular, we needed to know that $A(G)$ separates the points of $M(\cst(G))$.
More generally, this works as long as $G$ is amenable\footnote{Note that for nonamenable $G$ we have
$A(G)\subset B_r(G)$, the reduced Fourier algebra of $G$,
which kills the kernel of the regular representation of $G$).}.
In fact, \thmref{fb} is true for amenable $G$, and the current remark indicates that the above proof carries over.
\end{rem}

\begin{rem}\label{inflate}
In the proof of \thmref{fb} we only needed the spectral subspaces $A_x$ for $x$ in the finite subset $\spec$ of $G$.
This need not be a subgroup,
but when it is, the rest of $G$ is in some sense superfluous:
$\delta$ restricts to a coaction $\epsilon$ of $\spec$, and the given coaction of $G$ coincides with the inflation 
from $\spec$ to $G$,
as indicated in the commutative diagram
\[
\begin{tikzcd}
A \arrow[r,"\epsilon"] \arrow[dr,"\delta"']
&M(A\xt \cst(\spec)) \arrow[d,hook]
\\
&M(A\xt \cstg).
\end{tikzcd}
\]
\end{rem}

\section{Unitary implementation}\label{s:unitary}

We give an elementary proof of an implementation theorem
(proved for arbitrary Type~I factors in \cite[Theorem~3]{wassermann},
using more sophisticated machinery).

\begin{thm}\label{Mn implement}
Every coaction of a compact group on $M_n$ is unitarily implemented.
\end{thm}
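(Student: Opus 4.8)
The plan is to prove this by working one irreducible representation of $G$ at a time, which turns the statement into a triviality about matrix algebras. Since $G$ is compact, Peter--Weyl gives $\cstg\cong\bigoplus_{\pi\in\what G}M_{d_\pi}$ (a $c_0$-direct sum over the irreducible representations), so the multiplier algebra is the $\ell^\infty$-direct product $M(\cstg)=\prod_\pi M_{d_\pi}$. Because $M_n$ is unital and finite-dimensional, this identifies
\[
M(M_n\xt\cstg)=M_n\xt M(\cstg)=\prod_\pi\bigl(M_n\xt M_{d_\pi}\bigr).
\]
Under this identification a unitary $w$ is exactly a bounded family $(w_\pi)_\pi$ of unitaries $w_\pi\in M_n\xt M_{d_\pi}$, and the implementation equation $\delta(a)=\ad w\,(a\xt 1)$ decouples into one equation for each $\pi$.

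First I would record that the coaction $\delta\colon M_n\to M(M_n\xt\cstg)$ is a unital $*$-homomorphism. Writing $\pi$ also for the (unital, strictly continuous) extension of the irreducible representation $\pi\colon\cstg\to M_{d_\pi}$ to $M(\cstg)$ --- which on the product decomposition is just the coordinate projection onto the $\pi$-factor --- the $\pi$-component of $\delta$ is $\delta_\pi:=(\id\xt\pi)\circ\delta\colon M_n\to M_n\xt M_{d_\pi}=M_{nd_\pi}$, again a unital $*$-homomorphism. By construction the $\pi$-component of $\delta(a)$ is $\delta_\pi(a)$, while the $\pi$-component of the standard embedding $a\mapsto a\xt 1$ is $\iota_\pi(a):=a\xt 1_{d_\pi}$.

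The key step is then elementary: $\iota_\pi$ and $\delta_\pi$ are two unital $*$-representations of the matrix algebra $M_n$ on the single finite-dimensional space $\C^{nd_\pi}$. Since $M_n$ has a unique irreducible representation up to unitary equivalence, any unital representation of $M_n$ on $\C^N$ is unitarily equivalent to a multiple of the standard one, the multiplicity being $N/n$; here both $\iota_\pi$ and $\delta_\pi$ are unital on $\C^{nd_\pi}$, so both have multiplicity $d_\pi$ and are therefore unitarily equivalent. Hence there is a unitary $w_\pi\in M_n\xt M_{d_\pi}$ with $w_\pi(a\xt 1)w_\pi^*=\delta_\pi(a)$ for all $a\in M_n$. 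Finally I would assemble $w:=(w_\pi)_\pi$; each $w_\pi$ has norm $1$, so the family is bounded and defines a unitary $w\in M(M_n\xt\cstg)$, and comparing $\pi$-components shows $\ad w\,(a\xt 1)=\delta(a)$, which is unitary implementation.

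This argument has no serious obstacle beyond bookkeeping, and it is the rigorous form of the heuristic from the introduction that the individual group data is recovered by composing with irreducible representations. The one point requiring care is the identification $M(M_n\xt\cstg)=\prod_\pi\bigl(M_n\xt M_{d_\pi}\bigr)$ together with the observation that, because $\delta$ is unital, every $\pi$-component is unital and so the multiplicities automatically agree --- it is exactly this that lets the componentwise unitaries be chosen and reassembled into a single multiplier. I would also emphasize what is \emph{not} being claimed: we make no assertion that slicing $w$ by $\ag$ yields a $*$-\hm\ of $\cg$, so we obtain unitary implementation rather than innerness, consistent with the distinction drawn in \secref{s:prelim}.
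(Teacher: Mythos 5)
Your proof is correct, and it follows the same overall strategy as the paper --- decompose $\cstg$ into its matrix blocks via Peter--Weyl, solve the implementation problem one block at a time, and reassemble the blockwise unitaries into a single multiplier --- but the key blockwise step is handled by a genuinely different (and more elementary) argument. The paper invokes the destabilization process: it forms the relative commutant $C_i$ of $\delta_i(M_n)$ in $M_n\xt M_{d_\pi}$, identifies $C_i\cong M_{d_\pi}$ by a dimension count, builds from this an automorphism of $M_n\xt M_{d_\pi}$ carrying $M_n\xt 1$ onto $\delta_i(M_n)$, and then uses the fact that automorphisms of elementary \cst-algebras are inner. You instead observe that $\iota_\pi\colon a\mapsto a\xt 1$ and $\delta_\pi=(\id\xt\pi)\circ\delta$ are two \emph{unital} $*$-representations of $M_n$ on $\C^{nd_\pi}$, and since every unital representation of $M_n$ on a space of dimension $nd_\pi$ is a multiplicity-$d_\pi$ multiple of the standard one, they are unitarily equivalent --- no relative commutants needed. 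Your route buys brevity and self-containedness: it reduces the block step to the uniqueness of the irreducible representation of a matrix algebra, whereas the paper's route showcases the destabilization machinery it has already set up in the preliminaries (and which generalizes to elementary algebras beyond $M_n$, the setting of Wassermann's theorem that the paper cites). The points you flag as requiring care --- the identification $M(M_n\xt\cstg)=\prod_\pi(M_n\xt M_{d_\pi})$, the unitality of each component forcing the multiplicities to match, and the boundedness of the family $(w_\pi)_\pi$ --- are exactly the right ones, and your closing remark that this yields unitary implementation rather than innerness is consistent with the paper's discussion.
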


\begin{proof}
Let $\delta:A\to M(A\xt\cstg)$ be a coaction where 
$A=\mn$.
Choose a decomposition of \cstg\ into a $c_0$-direct sum of matrix algebras:
\[
\cstg=\bigoplus_{i\in S}M_i.
\]
For each $i$ we have a central projection $p_i\in \cstg$ such that $p_i\cstg\simeq M_i$, and we fix such an isomorphism and in fact identify $p_i\cstg=M_i$.
Then we get
a \hm
\[
\delta_i=(1\xt p_i)\delta:A\to A\xt M_i.
\]

Since
$A$ is 
elementary,
we can apply the destabilization process:
write $C_i$ for the relative commutant of $\delta_i(A)$ in $M(A\xt M_i)$, so that
\[
A\xt M_i=\spn\{\delta(A)C_i\}\simeq A\xt C_i.
\]
Since $A=M_n$, we see that $C_i$ must also be a matrix algebra, and by counting
dimensions, we must have $C_i\simeq M_i$.
Fixing such an isomorphism, by composition we get an automorphism $\phi_i\in \aut(A\xt M_i)$ such that
\begin{align*}
\phi_i(A\xt 1)=\delta_i(A)\midtext{and}
\phi_i(1\xt M_i)=C_i.
\end{align*}
Since $A\xt M_i$ is elementary,
we can choose a unitary $U_i$ in $A\xt M_i$ such that
$\phi_i=\ad U_i$.
It follows that
\[
\delta_i(a)=\ad U_i(a\xt 1)\righttext{for all}a\in A.
\]

Now we put them together: define a unitary
\[
U=\sum_i U_i\in M(\cstg).
\]
Then we have
\[
\delta(a)=\sum_i\delta_i(a)=\sum_i \ad U_i(a\xt 1_{M_i})=\ad U(a\xt 1_{M(\cstg)}),
\]
so that $U$ implements $\delta$.
\end{proof}

\begin{lem}\label{different Us}
With the above notation, if $U$ is a unitary implementing $\delta$ then
the set of all unitaries implementing $\delta$ is
\[
\{U(1\xt u):u\in UM(\cstg)\}.
\]
\end{lem}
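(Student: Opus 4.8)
The plan is to reduce the statement to a relative-commutant computation. The inclusion $\supseteq$ is a direct verification: if $u\in UM(\cstg)$, then $1\xt u$ commutes with every $a\xt 1$, since $(1\xt u)(a\xt 1)=a\xt u=(a\xt 1)(1\xt u)$; hence
\[
\ad\bigl(U(1\xt u)\bigr)(a\xt 1)=U(1\xt u)(a\xt 1)(1\xt u)\inv U\inv=U(a\xt 1)U\inv=\delta(a),
\]
so $U(1\xt u)$ is a unitary in $M(A\xt\cstg)$ satisfying \eqref{implement} and therefore implements $\delta$.

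For the reverse inclusion, suppose $V\in M(A\xt\cstg)$ is any unitary implementing $\delta$. Then for all $a\in A$ we have $V(a\xt 1)V\inv=\delta(a)=U(a\xt 1)U\inv$, so $U\inv V(a\xt 1)(U\inv V)\inv=a\xt 1$; since $U\inv V$ is unitary, this means $U\inv V$ commutes with $a\xt 1$ for every $a\in A$. Thus $U\inv V$ lies in the relative commutant of $A\xt 1$ in $M(A\xt\cstg)$.

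The key step is to identify this relative commutant. Since $A=\mn$ is unital and finite-dimensional, $A\xt\cstg=M_n(\cstg)$, whose multiplier algebra is $M(A\xt\cstg)=A\xt M(\cstg)=M_n\bigl(M(\cstg)\bigr)$. A standard matrix computation then shows that the commutant of $A\xt 1=\mn\xt 1$ inside $A\xt M(\cstg)$ is exactly $1\xt M(\cstg)$: writing an element as $\sum_{i,j}e_{ij}\xt b_{ij}$ in matrix units and commuting it with each $e_{kl}\xt 1$ forces $b_{ij}=0$ for $i\ne j$ and $b_{ii}=u$ independent of $i$, for a single $u\in M(\cstg)$. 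Therefore $U\inv V=1\xt u$ for some $u\in M(\cstg)$, and since $U\inv V$ is unitary and $b\mapsto 1\xt b$ is a unital embedding of $M(\cstg)$, we get $u\in UM(\cstg)$. Hence $V=U(1\xt u)$, completing the reverse inclusion.

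The only real care needed --- and the main potential obstacle --- is the passage to multipliers: one must know that $M(A\xt\cstg)=A\xt M(\cstg)$ (valid because $A$ is finite-dimensional and unital) and that the relative-commutant identity, transparent for $A\xt\cstg$, persists at the multiplier level. Granting this bookkeeping, the argument is the elementary observation that two unitaries conjugating $A\xt 1$ onto the same subalgebra differ by a unitary commuting with $A\xt 1$.
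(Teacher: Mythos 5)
Your proof is correct. The underlying idea --- two unitaries implementing the same coaction differ by a unitary in the relative commutant of $A\xt 1$, and that commutant is $1\xt M(\cstg)$ --- is the same as the paper's, but the execution differs: the paper cuts everything down by the central projections $1\xt p_i$ coming from the decomposition $\cstg=\bigoplus_i M_i$ used in the proof of \thmref{Mn implement}, computes the commutant blockwise in each $A\xt M_i$, and then reassembles, whereas you work globally at the multiplier level via the identification $M(A\xt\cstg)=M(M_n(\cstg))=M_n(M(\cstg))=A\xt M(\cstg)$ (valid since $A=\mn$ is finite-dimensional and unital) and do a single matrix-unit computation. Your route is arguably cleaner here, since it avoids invoking the block decomposition and the assembly step; the paper's blockwise version has the advantage of reusing verbatim the setup already in place from the implementation theorem, and of not needing the multiplier-algebra identification as a separate ingredient. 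Both the easy inclusion and the commutant computation in your write-up check out, including the observation that $U\inv V$ unitary and of the form $1\xt u$ forces $u\in UM(\cstg)$.
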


\begin{proof}
This follows quickly from doing it on each quotient by $1\xt p_i$,
then assembling them.
\end{proof}

\section{Inner}\label{s:inner}

In the preceding section we saw that all coactions of $G$ on \mn\ are implemented by unitaries $U\in M(\mn\xt\cstg)$.
The inner coactions of $G$ on \mn\ are precisely those
for which we can take
$U=(\mu\xt\id)(w_G)$
for a unital \hm\ $\mu:C(G)\to \mn$.

We pause to see what $\mu$ must look like.
By the Spectral Theorem we can choose
orthogonal minimal
projections $p_1,\dots,p_n$
and $x_1,\dots,x_n\in G$ such that
\[
\mu(f)=\sum_i f(x_i)p_i.
\]
Moreover, any such choice of $p_i$ and $x_i$ will give a \hm\ $\mu:\cg\to \mn$.

For any coaction $\delta$ of $G$ on \mn, observe that $\fixed$ is a unital \cst-subalgebra of $M_n$;
we get some information by
looking at its maximal abelian \cst-subalgebras.

\begin{thm}\label{fixed inner}
Let $\delta$ be a coaction of $G$ on $A=\mn$ with $n>1$.
Then $\delta$ is inner if and only if
its fixed-point algebra $\mn^\delta$ has an abelian \cst-subalgebra of dimension $n$.
\end{thm}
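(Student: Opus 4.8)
The plan is to read off the coaction explicitly on a system of matrix units adapted to the abelian subalgebra, handling both implications this way. Suppose first that $\delta$ is inner, so that it is implemented by $w_\mu=(\mu\xt\id)(w_G)$ for a unital \hm\ $\mu\:\cg\to\mn$. By the Spectral Theorem description recorded just above the theorem, $\mu(f)=\sum_i f(x_i)p_i$ for orthogonal minimal projections $p_1,\dots,p_n$ and points $x_1,\dots,x_n\in G$. Slicing by $\ag$ and using that $\ag$ separates points of $M(\cstg)$, I would first identify $w_\mu=\sum_i p_i\xt x_i$. Fixing matrix units $\{e_{ij}\}$ with $e_{ii}=p_i$, a direct computation then gives $\delta(e_{ij})=\ad w_\mu(e_{ij}\xt 1)=e_{ij}\xt x_ix_j\inv$; in particular $\delta(p_k)=p_k\xt 1$, so each $p_k$ is fixed. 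Hence $D:=\spn\{p_1,\dots,p_n\}$ is an abelian \cst-subalgebra of $\fixed$ of dimension $n$, which settles the forward direction.

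For the converse, suppose $\fixed$ contains an abelian \cst-subalgebra $D$ of dimension $n$. Then $D$ is a maximal abelian subalgebra of $\mn$, spanned by $n$ orthogonal minimal projections $p_1,\dots,p_n$ with $\sum_i p_i=1$. By \thmref{Mn implement}, $\delta$ is implemented by some unitary $U\in \mn\xt M(\cstg)$. Since each $p_i$ lies in $\fixed$, the unitary $U$ commutes with $p_i\xt 1$ for every $i$; because $D$ is maximal abelian, $U$ therefore lies in the relative commutant $D\xt M(\cstg)$, so $U=\sum_i p_i\xt c_i$ with $c_i\in M(\cstg)$, and unitarity of $U$ forces each $c_i$ to be unitary. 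Computing on matrix units exactly as before yields $\delta(e_{ij})=e_{ij}\xt c_ic_j^*$.

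The step I expect to be the crux is promoting the multipliers $c_ic_j^*$ to genuine elements of $G$, since a priori $U$ need not be a corepresentation (as the excerpt warns, an implementing unitary need give an inner coaction only when slicing it yields a \hm). Here I would feed $e_{ij}$ into the coaction identity $(\delta\xt\id)\delta=(\id\xt\dg)\delta$: the left-hand side evaluates to $e_{ij}\xt c_ic_j^*\xt c_ic_j^*$ while the right-hand side evaluates to $e_{ij}\xt\dg(c_ic_j^*)$, so that $\dg(c_ic_j^*)=c_ic_j^*\xt c_ic_j^*$. By the group-like element fact recorded in \secref{s:prelim}, this forces $c_ic_j^*\in G$.

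Finally, setting $x_i:=c_ic_1^*\in G$ gives $c_ic_j^*=x_ix_j\inv$, hence $\delta(e_{ij})=e_{ij}\xt x_ix_j\inv$. The unital \hm\ $\mu(f):=\sum_i f(x_i)p_i$ then has $w_\mu=\sum_i p_i\xt x_i$ and, by the computation of the first paragraph, implements precisely this coaction. Therefore $\delta$ is inner, completing the proof. The only delicate points are the relative-commutant identification placing $U$ in $D\xt M(\cstg)$ and the passage through the coaction identity; the rest is bookkeeping with the orthogonality relations $p_ke_{ij}p_l=\delta_{ki}\delta_{lj}e_{ij}$.
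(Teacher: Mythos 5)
Your proposal is correct and follows essentially the same route as the paper's proof: both directions reduce to the decomposition $U=\sum_i p_i\xt c_i$, the coaction identity plus the group-like-element fact to force $c_ic_j^*\in G$, and the normalization $x_i:=c_ic_1^*$ (the paper uses $y_n^*$) to produce the homomorphism $\mu$. The only cosmetic difference is that you obtain the diagonal form of $U$ via the maximal-abelian commutant argument, whereas the paper computes $\delta(p_k)=p_k\xt 1$ directly on a general $U=\sum_{i,j}e_{ij}\xt y_{ij}$; these amount to the same calculation.
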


\begin{proof}
First suppose that 
$M^\delta_n$ has
an abelian \cst-subalgebra of dimension $n$.
Then we can find rank-one projections $p_1,\dots p_n\in \fixed$ such that $\sum_ip_i=1$.
Choose matrix units $e_{ij}$ such that $e_{ii}=p_i$ for $i=1,\dots,n$.
By \thmref{Mn implement} we can choose a unitary $U$ implementing $\delta$.
We can 
write
\[
U=\sum_{i,j} e_{ij}\xt y_{ij}\righttext{with}y_{ij}\in M(\cstg).
\]
For each $k$ we have
\begin{align*}
p_k\xt 1
&=\delta(p_k)
\\&=\sum_{i,r,j,s}e_{ir}p_k e_{sj}\xt y_{ir}y_{js}^*
\\&=\sum_{i,j}e_{ij}\xt y_{ik}y_{jk}^*.
\end{align*}
Because the $e_{ij}$ are linearly independent,
all terms except when $i=j=k$
are zero,
and 
$y_{kk}y_{kk}^*=1$, so $y_{kk}$ is a unitary.
On the other hand, with $i\ne k$ and $j=k$ we get
\[
0=y_{ik}y_{kk}^*,
\]
so $y_{ik}=0$ since $y_{kk}\in UM(\cstg)$.
Thus
\[
U=\sum_i p_i\xt y_i,
\]
where we have written $y_i=y_{ii}$ to
simplify the notation.
For 
all $i,j$
we have
\[
\delta(e_{ij})=\sum_{r,s} p_re_{ij}p_s\xt y_ry_s^*=e_{ij}\xt y_iy_j^*.
\]
The coaction identity 
implies that
$\delta_G(y_iy_j^*)=y_iy_j^*\xt y_iy_j^*$, so $y_iy_j^*\in G$ and $e_{ij}\in A_{y_iy_j^*}$.

Now we adjust $U$ so that $y_i$ itself is in $G$:
recall that if $U'$ is any unitary implementing $\delta$ then $U'=U(1\xt u)$ for some $u\in UM(\cstg)$,
and moreover every choice of $u\in UM(\cstg)$ gives a suitable $U'$.
If we take $u=y_n^*$,
then
$x_i:=y_iy_n^*\in G$ for $i=1,\dots,n$
(and $x_n=e$),
and therefore we now can
replace the original $U$ by
\[
U=\sum_i p_i\xt x_i
\]
with $x_i\in G$.

We can define a \hm\ $\mu:\cg\to \mn$ by
\[
\mu(f)=\sum_i p_if(x_i),
\]
and then
\[
U=(\mu\xt\id)(w_G),
\]
so $\delta$ is inner.

Conversely, suppose that $\delta$ is inner, determined by a unital \hm\ $\mu:\cg\to A$.
Choose
mutually orthogonal minimal projections $p_1,\dots,p_n\in \mn$
and $x_1,\dots,x_n\in G$ such that
\[
\mu(f)=\sum_i f(x_i)p_i.
\]
Thus $\delta$ is implemented by
\[
(\mu\xt\id)(w_G)=\sum_i p_i\xt x_i,
\]
and
a quick computation shows that
\[
\delta(a)=\sum_{i,j} p_iap_j\xt x_ix_j\inv.
\]
Choosing matrix units $e_{ij}$ such that $e_{ii}=p_i$, we have $e_{ij}\in A_{x_ix_j\inv}$.
In particular, $p_i=e_{ii}\in A_e=A^\delta$ for all $i$, so
the fixed-point algebra $A^\delta$ has an abelian \cst-subalgebra of dimension $n$.
\end{proof}

Paraphrasing the above theorem and its proof, we see that an inner coaction $\delta$ on $A=M_n$ is implemented by
\[
U=\sum_1^np_i\xt x_i,
\]
where $p_i=e_{ii}$ and $x_i\in G$.
For the matrix units we have
\[
e_{ij}\in A_{x_ix_j\inv}.
\]
Moreover, the $x_i$'s are only unique up to multiplying (on the right) by any fixed $x\in G$.
Here are some easy consequences:

\begin{prop}\label{eij}
With the above notation, we have:
\begin{enumerate}[label=\textup{(\arabic*)}]
\item
$e_{ij}\in A^\delta$ if and only if $x_i=x_j$;

\item
For any particular pair $i,j$, if $e_{ij}\in A^\delta$ then without loss of generality we can modify the $x$'s so that $x_i=x_j=e$,
and then for all $k\ne i,j$ we have
\begin{align*}
e_{ik}&\in A_{x_k\inv}\\
e_{kj}&\in A_{x_k}.
\end{align*}
\end{enumerate}
\end{prop}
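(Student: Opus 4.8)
The plan is to read both parts directly off the explicit description of $\delta$ recorded just before the proposition: $\delta$ is implemented by $U=\sum_i p_i\xt x_i$ with $p_i=e_{ii}$ and $x_i\in G$, and consequently $e_{kl}\in A_{x_kx_l\inv}$ for every pair $k,l$. The only extra ingredients I will need are that distinct elements of $G$ are linearly independent in $M(\cstg)$ (a fact already used in the proof of \thmref{fb}), so that a nonzero element of $A$ lies in at most one spectral subspace, together with the freedom, furnished by \lemref{different Us}, to right-translate all the $x_i$ simultaneously by a fixed $x\in G$.

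For part (1), I would compare the two descriptions of $\delta(e_{ij})$. By definition $A^\delta=A_e$, and we already know $e_{ij}\in A_{x_ix_j\inv}$; hence $e_{ij}\in A^\delta$ is equivalent to $e_{ij}\xt(x_ix_j\inv)=e_{ij}\xt e$ in $A\xt\cstg$. Since $e_{ij}\neq 0$ and distinct group elements are linearly independent, this holds exactly when $x_ix_j\inv=e$, that is, $x_i=x_j$. The reverse implication is immediate from the same membership formula.

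For part (2), assume $e_{ij}\in A^\delta$, so $x_i=x_j$ by part (1). The key move is to invoke \lemref{different Us} with the group element $x_i\inv$: replacing $U$ by $U(1\xt x_i\inv)$ leaves each diagonal projection $p_k=e_{kk}$ --- and hence every matrix unit $e_{kl}$ --- unchanged, while it replaces each $x_k$ by $x_kx_i\inv$. After this adjustment (and renaming the new elements $x_k$) we have $x_i=x_j=e$, while the formula $e_{kl}\in A_{x_kx_l\inv}$ persists for the new data. Evaluating it for $k\neq i,j$ gives $e_{ik}\in A_{e\,x_k\inv}=A_{x_k\inv}$ and $e_{kj}\in A_{x_k e\inv}=A_{x_k}$, as asserted.

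I do not expect a genuine obstacle: the proposition is essentially bookkeeping layered on top of the paragraph preceding it. The one point I would flag and state carefully is that the translation $x_k\mapsto x_kx_i\inv$ arises from multiplying $U$ on the right by $1\xt x_i\inv$, which fixes the projections $p_k$ and therefore the matrix units; it is precisely this that guarantees the spectral-subspace formula transforms in the expected way, rather than the matrix units themselves being altered.
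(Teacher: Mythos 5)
Your argument is correct and follows exactly the route the paper intends: Proposition~\ref{eij} is stated as an immediate consequence of the preceding paragraph, where $\delta(e_{kl})=e_{kl}\xt x_kx_l\inv$ and the right-translation freedom in the $x_i$'s (via \lemref{different Us}) are already recorded. Your added care about why multiplying $U$ on the right by $1\xt x_i\inv$ fixes the matrix units while translating the $x_k$'s is exactly the right point to check, and it goes through.
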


\begin{ex}\label{m2}
For inner coactions $\delta$ of $G$ on $A=M_2$,
since $e_{22}\in A^\delta$
the only choice is the element $x\in G$ for which $e_{12}\in A_x$,
and then the spectrum is $\spec=\{e,x,x\inv\}$.
If $x=x\inv$ then $\spec=\Z_2$.
In general $\spec$ may not be a subgroup of $G$,
unless $x^2=x\inv$, in which case $\spec=\Z_3$.
Note that if $x^2\ne x\inv$ we could still ``regard'' this as coming from an effective
coaction of $\Z_3$,
but this is of questionable value since $G$ might not have $\Z_3$ as a subgroup.

Of course, if $x=e$ then $\delta$ is trivial and $A^\delta=A$.
\end{ex}

\begin{ex}\label{m3}
For inner coactions $\delta$ of $G$ on $A=M_3$,
if for example
\[
A^\delta=\spn\{e_{ij}:1\le i,j\le 2\text{ or }i=j=3\},
\]
then $\delta$ is implemented by a co\rep\ of the form
\[
U=e_{11}\xt 1+e_{22}\xt 1+e_{33}\xt x
\]
for some $x\in G$.
This is a routine application of \propref{eij} and the discussion immediately preceding it.
\end{ex}

\section{Ergodic}\label{s:ergodic}

Focusing on coactions of our compact group $G$ on $M_n$, there are some immediate, easy facts.

\begin{lem}\label{trivial}
If $G$ has
an effective ergodic coaction on \mn\ 
with $n>1$,
then:
\begin{enumerate}[label=\textup{(\arabic*)}]
\item
$G$ has order $|G|=n^2$.

\item
If $G$ is nonabelian then $n>2$.

\item
$G$ is noncyclic.
\end{enumerate}
\end{lem}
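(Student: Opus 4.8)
The plan is to extract one structural decomposition and then read all three conclusions off it. Since $\mn$ is finite-dimensional, \thmref{fb} presents $A=\mn$ as the cross-sectional algebra of the Fell bundle $\{A_x\}_{x\in G}$; concretely $A$ is the internal direct sum $\bigoplus_{x\in\spec}A_x$, the sum being direct because a slicing argument together with the linear independence of group elements in $M(\cstg)$ shows the spectral subspaces are linearly independent. Ergodicity says each nonzero $A_x$ is spanned by a single unitary $U_x$ (as recalled in \secref{s:prelim}), so $\dim A_x=1$ for $x\in\spec$, and effectiveness says $\spec=G$; in particular $G$ is finite and $\dim A_x=1$ for every $x\in G$. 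This decomposition is the whole engine of the lemma, and the one point that repays care is verifying that the sum of spectral subspaces is genuinely direct with one-dimensional summands, so that $\dim\mn=\sum_{x}\dim A_x$.

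Conclusion (1) is then a dimension count: $n^2=\dim\mn=\sum_{x\in G}\dim A_x=|G|$. Conclusion (2) I would argue by contraposition: if $n\le 2$ then $n=2$ since $n>1$, so by (1) $|G|=4$, and every group of order $4$ is abelian, whence a nonabelian $G$ forces $n>2$.

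For (3), suppose toward a contradiction that $G$ is cyclic, say $G=\langle g\rangle$ of order $n^2$, and let $U_g$ span $A_g$. Applying $\delta$ gives $\delta(U_g^k)=U_g^k\xt g^k$, so each power $U_g^k$ lies in $A_{g^k}$, and being a unitary it is nonzero and hence spans the one-dimensional space $A_{g^k}$. Since $g$ generates $G$, the powers $\{U_g^k:k\in\Z\}$ span $\bigoplus_{x\in G}A_x=\mn$. But then $\mn$ coincides with the \csta\ generated by the single unitary $U_g$, which is commutative --- impossible for $n>1$. There is no serious obstacle in this lemma; it really is bookkeeping once the decomposition is in hand, and the only step carrying a genuine idea is this commutativity observation in (3), where a cyclic spectrum would force $\mn$ to be singly generated as a \csta.
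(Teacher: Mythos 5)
Your proposal is correct and follows essentially the same route as the paper: a dimension count over the one-dimensional spectral subspaces for (1), the observation that groups of order $4$ are abelian for (2), and for (3) the same ``quick and dirty'' argument that the powers of a single spanning unitary would exhaust all spectral subspaces and force \mn\ to be commutative. The only cosmetic difference is that you derive the direct-sum decomposition from \thmref{fb} and a slicing argument, whereas the paper simply cites \cite{katson} for the linear independence of the spectral subspaces.
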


\begin{proof}
(1)
If $\delta$ is an ergodic coaction of $G$ on $A=\mn$,
then (see \cite{katson})
every spectral subspace $A_x$ is spanned by a unitary, and
the subspaces $\{A_x:x\in G\}$ are linearly independent,
so $|G|=n^2$.

(2)
This follows immediately from (1).

(3)
This follows quickly from the triviality of the 2-cohomology $H^2(G,\T)$ when $G$ is cyclic; see \cite[Proposition~III.5.11]{fd1} for example.
But here is a quick and dirty argument:
Suppose $G$ is cyclic with generator $x$, and pick a unitary $u\in A_x$.
Then $\{u^i:i=1,\dots,n\}$ is a choice of unitaries in the spectral subspaces,
from which it follows that \mn\ is commutative, which is absurd.
\end{proof}

\lemref{trivial} is almost trivial.
The (seemingly) slightly stronger
\propref{xy} below is easy but relies on somewhat more sophisticated technology.

\begin{prop}\label{xy}
If $G$ has an ergodic coaction on \mn\ with $n>1$,
then $G$ has a finite abelian noncyclic subgroup.
\end{prop}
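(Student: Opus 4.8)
The plan is to reduce to the spectrum $H := \spec$ and then read off a noncyclic abelian subgroup from the twisting of the coaction. Since $\delta$ is ergodic, the background in \secref{s:prelim} gives that $\spec$ is a subgroup of $G$, that each spectral subspace $A_x$ with $x\in\spec$ is spanned by a unitary $U_x$, and in fact that $A_x=\C U_x$ is one-dimensional: if $a\in A_x$ then $U_x^*a\in A_{x\inv}A_x\subseteq A_e=\C 1$, so $a\in\C U_x$. By \thmref{fb} the finite-dimensional algebra $A=\mn$ decomposes as the internal direct sum $\bigoplus_{x\in\spec}A_x$, so $H=\spec$ is finite with $|H|=\dim\mn=n^2$. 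The $n^2$ linearly independent unitaries $\{U_x\}_{x\in H}$ form an $\omega$-\rep\ for a normalized cocycle $\omega\in Z^2(H,\T)$ via \eqref{cocycle}, and $\spn\{U_x\}=\mn$; thus the twisted group \csta\ $\cst_\omega(H)=\spn\{U_x\}=\mn$ is simple, i.e. $H$ is of central type.

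Next I would record the commutation pairing. For commuting $x,y\in H$, \eqref{cocycle} gives $U_xU_y=\beta(x,y)U_yU_x$ with $\beta(x,y):=\omega(x,y)\bar{\omega(y,x)}\in\T$, and a routine check shows that on any abelian subgroup $A\le H$ the map $\beta$ is an alternating bicharacter $A\times A\to\T$ (it is bimultiplicative, and $\beta(a,a)=1$). The relevant consequence is that $\beta$ is trivial on every cyclic group: if $\<x,y\>$ were cyclic with generator $h$, then $x,y$ would be powers of $h$ and $\beta(x,y)$ a power of $\beta(h,h)=1$. Hence whenever $x,y$ commute and $\beta(x,y)\ne 1$, the subgroup $\<x,y\>$ is abelian but \emph{not} cyclic.

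The crux is therefore to exhibit commuting elements $x,y$ with $\beta(x,y)\ne 1$, and this is exactly where the simplicity of $\cst_\omega(H)$ is used, through Kleppner's regular-element criterion \cite{kleppner}: because $\cst_\omega(H)\cong\mn$ is simple, the only $\omega$-regular element of $H$ is $e$. Since $n>1$ we may pick $z\in H$ with $z\ne e$; as $z$ is not $\omega$-regular, by definition there is some $g$ in the centralizer $C_H(z)$ with $\omega(z,g)\ne\omega(g,z)$, that is $\beta(z,g)\ne 1$. Because $g$ centralizes $z$, the group $\<z,g\>$ is abelian, and by the previous step it is noncyclic. This $\<z,g\>$ is the desired finite abelian noncyclic subgroup of $G$.

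The main obstacle is this last step: passing from ``$\cst_\omega(H)$ is simple'' to ``some commuting pair has nontrivial commutation phase''. The naive route --- choose a maximal abelian $A\le H$ and argue that $\beta|_A$ is nondegenerate (which would already force $A$ noncyclic) --- stalls, because a degeneracy $\beta(a,\cdot)|_A\equiv 1$ does not by itself make $a$ an $\omega$-regular element of $H$: one controls $\beta(a,g)$ only for $g\in A$, whereas regularity concerns all $g\in C_H(a)$, which may properly contain $A$. Kleppner's criterion, already set up for use in \secref{s:ergodic}, supplies precisely the missing global input; the remaining ingredients (the reduction to $H$, the bicharacter computation, and the triviality of $\beta$ on cyclic groups) are all routine.
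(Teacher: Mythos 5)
Your proof is correct and follows essentially the same route as the paper's: reduce to the spectrum, form the cocycle $\omega$ from spanning unitaries, invoke Kleppner's result that simplicity of the twisted group algebra forces every element of $G\minus\{e\}$ to be non-$\omega$-regular, and conclude that the resulting commuting pair with $\omega(x,y)\ne\omega(y,x)$ generates a noncyclic abelian subgroup. The only cosmetic difference is at the last step, where you verify directly that the antisymmetrization $\beta$ is an alternating bicharacter and hence trivial on cyclic groups, while the paper instead observes that $\omega$ restricts to a nontrivial cocycle on $\<x,y\>$ and cites the vanishing of $H^2(\Z_m,\T)$.
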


\begin{proof}
Suppose that $\delta$ is an ergodic coaction on $A=\mn$.
Then the spectrum $\spec$ is a subgroup of order $n^2$,
and without loss of generality we suppose that $G=\spec$.
Choose spanning unitaries $U_x$ for the spectral subspaces $A_x$,
and let $\omega$ be the associated 2-cocycle.
Then $U$ is 
an irreducible $\omega$-\rep\ of $G$.
In \cite[page~559]{kleppner} Kleppner defines $x\in G$ to be \emph{$\omega$-regular} if
\[
\omega(x,y)=\omega(y,x)\righttext{whenever}xy=yx.
\]
and it follows from
\cite[Corollary~1]{kleppner} that
$G\minus \{e\}$ contains no $\omega$-regular elements.
Thus, picking any $x\ne e$ we can find $y\ne e$ such that $xy=yx$ and $\omega(x,y)\ne \omega(y,x)$.
Let $H=\<x,y\>$ be the abelian subgroup generated by $x$ and $y$, so $\omega$ restricted to $H\times H$ is a non-trivial cocycle.
As we mentioned in the preceding proof,
$H$ must be noncyclic.
\end{proof}

\begin{rem}
When $G$ is abelian, Kleppner's characterization mentioned in the above proof
can actually be reformulated as 
\cite[Theorem~5.9]{optergodic}.
Even if $G$ is not assumed to be abelian, it is possible to give a characterization generalizing that in \cite{optergodic}.
\end{rem}

\begin{prop}\label{prime}
If $G$ has 
an irreducible projective \rep\ $U$ on $\C^p$
with $p$ prime
and $\{U_x:x\in G\}$ linearly independent, then
$G=\Z_p\times \Z_p$.With $\gamma=\exp(2\pi i/p)$, then up to equivalence
\[
U(1,0)=\diag(\gamma^j  \mid j=1\cdots p)  \midtext{and} 
U(0,1)=\{\delta(j,j+1)\mid j\in \Z_p\}.
\]
\end{prop}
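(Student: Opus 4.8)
The plan is to first pin down $G$ from a dimension count, and then realize $U$ as the standard Weyl representation by diagonalizing one generator and using the commutation relation to force the second generator to be a cyclic shift.

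\textbf{Determining $G$.} First I would observe that $\spn\{U_x : x\in G\}$ is a unital $*$-subalgebra of $M_p(\C)$: it is closed under multiplication because $U_xU_y=\omega(x,y)U_{xy}$, and under adjoints because $U_x^*=U_x^{-1}$ is a scalar multiple of $U_{x^{-1}}$. Since $U$ is irreducible this $*$-algebra acts irreducibly on $\C^p$, hence by Burnside equals $M_p(\C)$; as the $U_x$ are linearly independent, this forces $|G|=\dim M_p(\C)=p^2$. A group of order $p^2$ is abelian, so $G$ is either $\Z_{p^2}$ or $\Z_p\times\Z_p$, and I would rule out the cyclic case exactly as in \lemref{trivial}: a single projective generator generates a commutative algebra, contradicting $\spn\{U_x\}=M_p(\C)$. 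Thus $G\cong\Z_p\times\Z_p$.

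\textbf{Normalizing the commutation relation.} Next I would introduce the commutator pairing $\beta(x,y)=\omega(x,y)\bar{\omega(y,x)}$, so that $U_xU_y=\beta(x,y)U_yU_x$; it is a bicharacter valued in $p$-th roots of unity. If $\beta(g,\cdot)\equiv 1$ for some $g\ne e$, then $U_g$ would be central in $M_p(\C)$, hence scalar, contradicting linear independence; so $\beta$ is nondegenerate. Since $p$ is prime, every nontrivial value is a primitive $p$-th root, so I can select generators, labeled $(1,0)$ and $(0,1)$, with $\beta((1,0),(0,1))=\gamma$. Writing $a=U(1,0)$, $b=U(0,1)$ and rescaling each by a scalar, I would arrange $a^p=b^p=1$ while preserving $ab=\gamma ba$.

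\textbf{Producing the explicit matrices.} From $ab=\gamma ba$ I get $a(bv)=\gamma\,b(av)$, so $b$ sends the $\lambda$-eigenspace of $a$ to the $\gamma\lambda$-eigenspace. Because $\gamma$ is primitive and $a^p=1$, the eigenvalues of $a$ form one $\langle\gamma\rangle$-orbit, hence are exactly $1,\gamma,\dots,\gamma^{p-1}$, each simple; ordering eigenvectors $v_0,\dots,v_{p-1}$ by $av_j=\gamma^j v_j$ gives $a=\diag(\gamma^j)$, the claimed form of $U(1,0)$. The relation then forces $bv_j=\mu_j v_{j+1}$ (indices mod $p$); a diagonal rescaling of the $v_j$ absorbs the $\mu_j$ and turns $b$ into the cyclic shift $v_j\mapsto v_{j+1}$, the claimed form of $U(0,1)$, while leaving $a$ diagonal. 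The normalization $b^p=1$ gives $\prod_j\mu_j=1$, which is exactly what is needed for the shift to close up consistently around the cycle.

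\textbf{Expected difficulty.} The conceptual core is a finite-dimensional Stone--von Neumann statement and is routine once the commutation relation is in hand. I expect the only real friction to be the scalar bookkeeping in the last two steps: simultaneously achieving $a^p=b^p=1$, the exact constant $\gamma$ (rather than some primitive $\gamma^k$), and the shift-by-one normalization for $b$, together with checking that the cyclic closure $b^p=1$ is compatible with the diagonal rescaling. Choosing the generators so that $\beta((1,0),(0,1))=\gamma$ rather than a general primitive root is precisely what makes the shift come out as $j\mapsto j+1$.
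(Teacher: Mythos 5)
Your proof is correct, and it reaches the conclusion $G=\Z_p\times\Z_p$ by a genuinely different route than the paper, while also supplying the second half of the statement that the paper explicitly leaves as an exercise. The paper's proof starts from Kleppner's regularity criterion (\cite[Corollary~1]{kleppner}, as in \propref{xy}) to produce a commuting pair $x,y$ with $\omega(x,y)\ne\omega(y,x)$, then runs an order argument: both elements must have order $p$, the cyclic subgroups $H=\<x\>$ and $K=\<y\>$ intersect trivially, so $|HK|=p^2=|G|$ and $G=HK\cong\Z_p\times\Z_p$. You instead get $|G|=p^2$ from Burnside (the span of the $U_x$ is an irreducibly acting unital $*$-subalgebra of $M_p(\C)$, hence everything, and linear independence counts it), invoke the fact that groups of order $p^2$ are abelian, and rule out the cyclic case as in \lemref{trivial}; your nondegeneracy of the commutator bicharacter $\beta(x,y)=\omega(x,y)\bar{\omega(y,x)}$ is derived directly from irreducibility (a $U_g$ with $\beta(g,\cdot)\equiv 1$ is central, hence scalar, hence $g=e$ by linear independence) rather than from Kleppner. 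This makes your argument more self-contained, and as a bonus the linear-independence bound $|G|\le p^2$ means you never need to assume finiteness separately, which the paper's proof quietly does. Your third step --- the finite Stone--von Neumann argument diagonalizing $a$ and realizing $b$ as the cyclic shift, with the observation that $b^p=1$ forces $\prod_j\mu_j=1$ so the rescaling closes up around the cycle --- is exactly the content of the uniqueness claim the paper declines to prove, and your bookkeeping there is sound. The only caveat is cosmetic: whether the shift comes out as $j\mapsto j+1$ or $j\mapsto j-1$ depends on the orientation of the symplectic basis, but you address this by normalizing $\beta((1,0),(0,1))=\gamma$, which matches the intended reading of the (somewhat loosely typeset) matrices in the statement.
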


\begin{proof}
From the assumptions there are $x,y\in G$ with $yx=xy$ and 
\[
\omega(x,y)\ne \omega(y,x).
\]
Since 
$|G|=p^2$,
the order of $x$ and $y$ must be  $1$,   $p$  or $p^2$ (remember that we assume $G$ is finite).
One quickly rules out $1$ and $p^2$. Then one realizes that one can pick  $x,y$ such that 
\[
\omega(x,y)\bar{\omega(y,x)}=\exp(2\pi i/p)=:\gamma.
\]
We must have $H\cap K=\{e\}$, so
$|HK|= |H| \, |K|=p^2= |G|$, and hence $G=\Z_p\times \Z_p$.
We leave as an exercise to show that $U$ as described is (up to equivalence) the only irreducible projective representation of $\Z_p\times \Z_p$ satisfying the theorem.
\end{proof}

\begin{ex}\label{su}
It follows from \propref{xy} that $\su$ has no ergodic coactions on any \mn\ with $n>1$,
because every finite abelian subgroup is cyclic (folklore).
Interestingly, the quotient group $\so$ has an ergodic coaction on $M_2$,
because it contains the Klein-4 group $\Z_2\times \Z_2$ as a subgroup;
for example, the elements
\[
\mtx{1&0&0\\0&-1&0\\0&0&-1}
\midtext{and}
\mtx{-1&0&0\\0&-1&0\\0&0&1}
\]
are suitable generators.
\end{ex}

\begin{rem}
\cite[Discussion following Corollary~1]{kleppner}
shows that if $H$ is any abelian group of order $n$,
then there is a 2-cocycle $\omega$ on $G=H\times \what H$ given by
\[
\omega\bigl((x,\chi),(y,\sigma)\bigr)=\chi(y),
\]
and no element of $G\minus\{e\}$ is $\omega$-regular,
so $G$
is of central type
and hence has
an effective ergodic coaction on \mn.
Note that the set-up in \propref{prime} is a special case.
In fact, Frucht proved in \cite{frucht} that a finite abelian group has a
faithful
irreducible projective \rep\
if and only if it is isomorphic to the square of an abelian group,
in which case it
is of central type,
as we explained in the the preliminaries.
This was rediscovered somewhat later by Olesen, Pedersen, and Takesaki \cite[Theorem~5.9]{optergodic}.

Ng \cite{ng} generalized this to nonabelian groups;
his setting was primarily metacyclic groups (extensions of cyclic by cyclic).
\cite[Proposition~4.1]{ng} shows that if $G$ is such a group, with generators $a$ and $b$,
if there is a faithful irreducible projective \rep\
then by \cite[Theorem~5.2]{ng} $G$ is the semidirect product of $\<a\>$ and $\<b\>$.
Moreover, if $|G|$ is a power of an odd prime,
then $G$ has a faithful irreducible projective \rep\ if and only if the subgroups $\<a\>$ and $\<b\>$ are isomorphic.
So, for $G$ to be nonabelian, the smallest such example would have order $81=3^4$.
\end{rem}

\begin{ex}\label{explicit}
By \cite[Theorems~3.8 and 3.9]{schnabel},
there are two nonabelian groups of central type of order 16 (the smallest possible).
Here
they are, with
examples of irreducible projective representations of degree 4:

\begin{itemize}
\item
$G=\bigl\<a,b,c\bigm| a^4=b^2=c^2=e,ab=ba,bc=cb,cac\inv=a\inv\bigr\>$.
For a suitable projective representation take
\[
A=\mtx{1&0&0&0\\0&-1&0&0\\0&0&i&0\\0&0&0&-i},
\quad
B=\mtx{0&1&0&0\\1&0&0&0\\0&0&0&1\\0&0&1&0},
\quad
C=\mtx{0&0&1&0\\0&0&0&1\\1&0&0&0\\0&1&0&0}
\]
Then
\[
AB=-BA,
\quad
A^4=B^2=C^2=I,
\quad
BC=CB,
\quad
CAC\inv=iA\inv.
\]

\item
$G=\bigl\<a,b,c\bigm| a^4=b^2=c^2=e,ab=ba,bc=cb,cac\inv=ab\bigr\>$.
Take the same $A,B$ as above, but now take
\[
C=\frac 1{\sqrt 2}\mtx{0&0&-i&1\\0&0&1&-i\\i&1&0&0\\1&i&0&0}.
\]
The only new relation is
$CAC\inv=AB$.
\end{itemize}

It seems there are rather few finite nonabelian groups of central type.
For example, in 
\cite[Theorem~3.21]{ginosar}
it is shown that if $G$ is nonabelian of central type and $|G|=p^2q^2$
with primes $p<q$,
then the number of nonisomorphic possibilities is one if $pq\ne 6$ and two if $pq=6$.
\end{ex}

\section{$G=S_3$}\label{s:s3}

It is interesting to examine coactions $\delta$ of the smallest nonabelian group $S_3$ (the permutations of a set of three elements) on $A=M_n$.
We fix a presentation
\[
S_3=\<\alpha,\beta\mid\alpha^3=\beta^2=e,\alpha\beta=\beta\alpha^2\>.
\]
Since all proper subgroups of $S_3$ are cyclic, to avoid trivialities we look for effective coactions.
By \lemref{trivial} this requires $n\ge 3$,
and we cannot expect $\delta$ to be ergodic.

Here's where we can make some progress: look for effective \emph{inner} coactions.
As we have observed, we need $n\ge 3$.
But even $n=3$ will not work:
in the notation of the proof of \thmref{fixed inner},
we have $\spec=\{x_ix_j\inv\}$ with $x_i\in S_3$, and we can assume $x_3=1$.
So, we can choose $x_1, x_2$, then
we have $e_{12}\in A_{x_1x_2\inv}$.
Since $S_3$ has three self-inverse elements, trivial computations show that we can have at most 
three non-identity elements 
in $\spec$, which is absurd.
Therefore we cannot have an effective inner coaction of $S_3$ on $M_3$.

Therefore, an effective inner coaction of $S_3$ on \mn\ requires $n\ge 4$.
And then it is easy to generate examples:
as we mentioned above, we might as well choose $x_4=1$.
Then, for example, we can choose
\[
x_1=\alpha,
\quad x_2=\beta,
\midtext{and}x_3=\alpha\beta.
\]
Then $\{x_ix_j\inv:i,j=1,\dots,3\}$ will contain
\[
x_1x_2\inv=\alpha\beta\midtext{and}
x_2x_3\inv=\beta\alpha=\alpha^2\beta,
\]
and we have accounted for all six elements of $S_3$.
Trivially we could have made other choices, and we could do something similar for any $n\ge 5$ as well.
Conclusion:

\begin{prop}\label{s3 inner}
For any $n\ge 4$ the above procedure gives all effective inner coactions of $S_3$ on \mn,
but there are none for $n<4$.
\end{prop}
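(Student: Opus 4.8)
The plan is to extract the parametrization already recorded in \thmref{fixed inner} and reduce the proposition to a count in $S_3$. The discussion following \thmref{fixed inner} shows that every inner coaction of $S_3$ on $A=\mn$ is implemented by a unitary $U=\sum_{i=1}^n p_i\xt x_i$, where the $p_i=e_{ii}$ are mutually orthogonal rank-one projections with $\sum_i p_i=1$ and $x_i\in S_3$, that $e_{ij}\in A_{x_ix_j\inv}$, and hence that $\spec=\{x_ix_j\inv:1\le i,j\le n\}$; moreover we may normalize $x_n=e$. This is exactly ``the above procedure,'' so the claim that it gives \emph{all} effective inner coactions is immediate: every inner coaction arises this way, and such a $\delta$ is effective precisely when $\{x_ix_j\inv:1\le i,j\le n\}=S_3$. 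Everything reduces to deciding for which $n$ that equality is attainable.

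First I would sort the spectrum by inversion. Since $x_jx_i\inv=(x_ix_j\inv)\inv$, the diagonal contributes $e$ while each \emph{unordered} index pair $\{i,j\}$ contributes a single inverse-closed set $\{x_ix_j\inv,x_jx_i\inv\}$ to $\spec\minus\{e\}$. On the target side I would record that the five non-identity elements of $S_3$ fall into four inverse-closed classes: the three self-inverse transpositions $\beta,\alpha\beta,\alpha^2\beta$ as singletons, together with the single two-element class $\{\alpha,\alpha^2\}$ of $3$-cycles. The key point is that a self-inverse $t$ can lie in $\{x_ix_j\inv,x_jx_i\inv\}$ only when $x_ix_j\inv=t$, in which case that set is the singleton $\{t\}$; thus each transposition requires its own index pair.

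This counting is the heart of the matter and the one place I expect friction: the $\binom{n}{2}$ unordered index pairs produce at most $\binom{n}{2}$ distinct non-identity classes, so $\spec=S_3$ forces all four classes to be realized, hence $\binom{n}{2}\ge 4$, i.e. $n\ge 4$. Since $\binom{n}{2}\in\{0,1,3\}$ for $n\in\{1,2,3\}$, every $n<4$ fails, which is the ``none for $n<4$'' half.

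Finally, to see the statement is not vacuous for $n\ge 4$, I would exhibit effective choices. With $x_4=e$ and $x_1=\alpha$, $x_2=\beta$, $x_3=\alpha\beta$, a direct check using $\alpha\beta=\beta\alpha^2$ shows $\{x_ix_j\inv\}$ exhausts $S_3$: for instance $x_1x_4\inv=\alpha$, $x_4x_1\inv=\alpha^2$, $x_2x_4\inv=\beta$, $x_3x_4\inv=\alpha\beta$, and $x_1x_3\inv=\alpha^2\beta$. For $n\ge 5$ one retains these four values and assigns the remaining $x_i$ arbitrarily (say $x_i=e$); adjoining indices only enlarges $\spec$, so effectiveness persists. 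Combining the two halves gives the proposition.
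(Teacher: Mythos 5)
Your proof is correct and follows essentially the same route as the paper: reduce, via the parametrization recorded after \thmref{fixed inner}, to the question of when $\{x_ix_j^{-1}\}$ can exhaust $S_3$, rule out $n\le 3$ by a count exploiting the self-inverse elements, and exhibit the same explicit choice $x_1=\alpha$, $x_2=\beta$, $x_3=\alpha\beta$, $x_4=e$ for $n\ge 4$. Your count by inverse-closed classes (at most $\binom{n}{2}$ of the four classes $\{\beta\},\{\alpha\beta\},\{\alpha^2\beta\},\{\alpha,\alpha^2\}$ can be hit) is in fact tidier than the paper's remark that at most three non-identity elements occur when $n=3$ --- the true maximum there is four, e.g.\ $\{\alpha,\alpha^2,\beta,\alpha\beta\}$, though either bound falls short of the required five.
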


\section{Conclusion}\label{sec:end}

Clearly, we have only scratched the surface here;
there are many aspects of compact coactions on \mn\ that need more investigation.
For example, \secref{s:inner} leads to the following train of thought:
the 
center of the
fixed-point algebra $A^\delta$
is spanned by
projections $\{p_i\}_{i\in S}$,
and the coaction $\delta$ restricts to the corners $p_iAp_i$.
But a bit more thought reveals that we have a ``coaction'' on a Fell bundle $\{p_iAp_j\}_{i,j\in S}$ over the groupoid 
$S\times S$ with multiplication $(i,j)(j,k)=(i,k)$.
It is tantalizing to think that this observation could yield significant further insight into the coaction $\delta$.

Also, coactions of compact groups have been recently shown to be \emph{Pedersen rigid} in the sense that the outer conjugacy class is completely determined by the isomorphism class of the dual action (see \cite{compactrigid}). How is this related to our results in this paper?

A next step in the study of coactions of the compact group $G$ is to ``ramp up'' from \mn\ to $\KK$ (the compact operators on a separable infinite-dimensional Hilbert space).
Perhaps one should even investigate more general AF-algebras in this connection?
In particular, how about coactions of $G$ on UHF algebras, e.g., $M_n\xt M_n\xt\cdots$,
investigated by Masuda in \cite{masuda}?

If $U$ is a unitary implementing the coaction $\delta$ as in \thmref{Mn implement}, it is possible to show that there is a unique unitary $u\in M(\cstg\xt\cstg)$ such that
\[
(\id\xt\delta_G)(U)=U_{12}U_{13}(1\xt u).
\]
This looks like some sort of ``2-cocycle'' for $U$,
and perhaps it would be fruitful to regard $U$ as a sort of ``twisted co\rep''?
This would perhaps lead to a cohomology theory of coactions.
In fact, this is explored in \cite{landstad,wassermann}.
The ultimate goal here would be to classify
coactions of a compact group $G$ on matrix algebras \mn.


\begin{thebibliography}{KOQT23}

\bibitem[BKQT23]{gap}
Erik B\'{e}dos, S.~Kaliszewski, John Quigg, and Jonathan Turk.
\newblock Coactions on {$C^*$}-algebras and universal properties.
\newblock {\em Appl. Categ. Structures}, 31(5):14 pp, 2023.

\bibitem[EKQR06]{enchilada}
Siegfried Echterhoff, S.~Kaliszewski, John Quigg, and Iain Raeburn.
\newblock A categorical approach to imprimitivity theorems for
  {$C^*$}-dynamical systems.
\newblock {\em Mem. Amer. Math. Soc.}, 180(850):viii+169, 2006.

\bibitem[Exe17]{exel}
Ruy Exel.
\newblock {\em Partial dynamical systems, {F}ell bundles and applications},
  volume 224 of {\em Mathematical Surveys and Monographs}.
\newblock American Mathematical Society, Providence, RI, 2017.

\bibitem[Eym64]{eymard}
Pierre Eymard.
\newblock L'alg\`ebre de {F}ourier d'un groupe localement compact.
\newblock {\em Bull. Soc. Math. France}, 92:181--236, 1964.

\bibitem[FD88a]{fd1}
J.~M.~G. Fell and R.~S. Doran.
\newblock {\em Representations of {$^*$}-algebras, locally compact groups, and
  {B}anach {$^*$}-algebraic bundles. {V}ol. 1}, volume 125 of {\em Pure and
  Applied Mathematics}.
\newblock Academic Press, Inc., Boston, MA, 1988.
\newblock Basic representation theory of groups and algebras.

\bibitem[FD88b]{fd}
J.~M.~G. Fell and R.~S. Doran.
\newblock {\em Representations of {$^*$}-algebras, locally compact groups, and
  {B}anach {$^*$}-algebraic bundles. {V}ol. 2}, volume 126 of {\em Pure and
  Applied Mathematics}.
\newblock Academic Press, Inc., Boston, MA, 1988.
\newblock Banach $^*$-algebraic bundles, induced representations, and the
  generalized Mackey analysis.

\bibitem[Fis04]{fischer}
R.~Fischer.
\newblock Maximal coactions of quantum groups.
\newblock SFB 478 -- Geometrische Strukturen in der Mathematik, 350, Univ.
  M\"unster, 2004.

\bibitem[Fru32]{frucht}
Robert Frucht.
\newblock \"{U}ber die {D}arstellung endlicher {A}belscher {G}ruppen durch
  {K}ollineationen.
\newblock {\em J. Reine Angew. Math.}, 166:16--29, 1932.

\bibitem[GS19]{ginosar}
Yuval Ginosar and Ofir Schnabel.
\newblock Groups of central-type, maximal connected gradings and intrinsic
  fundamental groups of complex semisimple algebras.
\newblock {\em Trans. Amer. Math. Soc.}, 371(9):6125--6168, 2019.

\bibitem[HRr98]{hjeror}
Jacob v.~B. Hjelmborg and Mikael R\o rdam.
\newblock On stability of {$C^*$}-algebras.
\newblock {\em J. Funct. Anal.}, 155(1):153--170, 1998.

\bibitem[KOQ16]{destabilization}
S.~Kaliszewski, Tron Omland, and John Quigg.
\newblock Destabilization.
\newblock {\em Expo. Math.}, 34(1):62--81, 2016.

\bibitem[KOQT23]{compactrigid}
S.~Kaliszewski, Tron Omland, John Quigg, and Jonathan Turk.
\newblock Strong {P}edersen rigidity for coactions of compact groups.
\newblock {\em Internat. J. Math.}, 34(13):Paper No. 2350083, 13, 2023.

\bibitem[KS82]{katson}
Yoshikazu Katayama and Gu~Song.
\newblock Ergodic co-actions of discrete groups.
\newblock {\em Math. Japon.}, 27(2):159--175, 1982.

\bibitem[Kle62]{kleppner}
Adam Kleppner.
\newblock The structure of some induced representations.
\newblock {\em Duke Math. J.}, 29:555--572, 1962.

\bibitem[Lan92]{landstad}
Magnus~B. Landstad.
\newblock Ergodic actions of nonabelian compact groups.
\newblock In {\em Ideas and methods in mathematical analysis, stochastics, and
  applications ({O}slo, 1988)}, pages 365--388. Cambridge Univ. Press,
  Cambridge, 1992.

\bibitem[LPRS87]{lprs}
M.~B. Landstad, J.~Phillips, I.~Raeburn, and C.~E. Sutherland.
\newblock Representations of crossed products by coactions and principal
  bundles.
\newblock {\em Trans. Amer. Math. Soc.}, 299(2):747--784, 1987.

\bibitem[Mas08]{masuda}
Toshihiko Masuda.
\newblock Classification of actions of duals of finite groups on the {AFD}
  factor of type {${\rm II}_1$}.
\newblock {\em J. Operator Theory}, 60(2):273--300, 2008.

\bibitem[NT79]{naktak}
Yoshiomi Nakagami and Masamichi Takesaki.
\newblock {\em Duality for crossed products of von {N}eumann algebras}, volume
  731 of {\em Lecture Notes in Mathematics}.
\newblock Springer, Berlin, 1979.

\bibitem[Ng76]{ng}
H.~N. Ng.
\newblock Faithful irreducible projective representations of metabelian groups.
\newblock {\em J. Algebra}, 38(1):8--28, 1976.

\bibitem[OPT80]{optergodic}
Dorte Olesen, Gert~K. Pedersen, and Masamichi Takesaki.
\newblock Ergodic actions of compact abelian groups.
\newblock {\em J. Operator Theory}, 3(2):237--269, 1980.

\bibitem[Qui94]{fullred}
John~C. Quigg.
\newblock Full and reduced {$C^*$}-coactions.
\newblock {\em Math. Proc. Cambridge Philos. Soc.}, 116(3):435--450, 1994.

\bibitem[Qui96]{discrete}
John~C. Quigg.
\newblock Discrete {$C^*$}-coactions and {$C^*$}-algebraic bundles.
\newblock {\em J. Austral. Math. Soc. Ser. A}, 60(2):204--221, 1996.

\bibitem[Sch16]{schnabel}
Ofir Schnabel.
\newblock Simple twisted group algebras of dimension {$p^4$} and their
  semi-centers.
\newblock {\em Comm. Algebra}, 44(12):5395--5425, 2016.

\bibitem[War76]{warner}
C.~Robert Warner.
\newblock A class of spectral sets.
\newblock {\em Proc. Amer. Math. Soc.}, 57(1):99--102, 1976.

\bibitem[Was88]{wassermann}
Antony Wassermann.
\newblock Ergodic actions of compact groups on operator algebras. {II}.
  {C}lassification of full multiplicity ergodic actions.
\newblock {\em Canad. J. Math.}, 40(6):1482--1527, 1988.

\end{thebibliography}

\end{document}